

\documentclass[12pt]{amsart}
\usepackage{amssymb,latexsym}
\usepackage{enumerate}
\usepackage{hyperref}

\makeatletter \@namedef{subjclassname@2010}{%
  \textup{2010} Mathematics Subject Classification}
\makeatother

\newtheorem{Theorem}{Theorem}

\newtheorem{Lemma}{Lemma}
\newtheorem{Corollary}{Corollary}

\newtheorem*{Conjecture}{Conjecture}
\newtheorem*{UnnumberedTheorem}{Theorem}



	\newcommand{\ZZ}[0]{\mathbb Z}


	\newcommand{\cP}[0]{\mathcal P}

\newcommand{\cL}[0]{\mathcal L}	
	


\newcommand{\eps}[0]{\varepsilon}

\newcommand{\lr}[1]{\left(#1\right)}


\begin{document}


\baselineskip=17pt



\title{The Additive Structure of Cartesian Products Spanning Few Distinct Distances}

\author[Brandon Hanson]{Brandon Hanson} \address{Pennsylvania State University\\
University Park, PA}
\email{bwh5339@psu.edu}

\begin{abstract}
Guth and Katz proved that any point set $\cP$ in the plane determines $\Omega(|\cP|/\log|\cP|)$ distinct distances. We show that when near to this lower bound, a point set $\cP$ of the form $A\times A$ must satisfy $|A-A|\ll |A|^{2-1/8}$.
\end{abstract}

\maketitle

\section{Introduction}

If $\cP$ is a set of $N$ points in the plane, let $\Delta(\cP)$ denote the set of (squared) distances spanned by $\cP$, that is
\[\Delta(\cP)=\{(p_1-q_1)^2+(p_2-q_2)^2:(p_1,p_2),(q_1,q_2)\in\cP\}.\] Erd\H{o}s famously conjectured \cite{E1} that any set of points needs to determine at least a $\Omega(N/\sqrt{\log N})$ distinct numbers as distances. This lower bound occurs when the points are arranged in a square grid in the integer lattice, for instance. In a landmark paper \cite{GK}, Guth and Katz very nearly established Erd\H{o}s' conjecture, by proving the lower bound:

\begin{Theorem}[Guth-Katz]\label{GuthKatz}
If $\cP$ is a set of $N$ points in the plane then $|\Delta(\cP)|\gg \frac{N}{\log N}$.
\end{Theorem}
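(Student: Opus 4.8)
The plan is to follow the Elekes--Sharir reduction, which converts the distinct distances problem into an incidence problem for lines in $\RR^3$, and then to attack that incidence problem by the polynomial partitioning method. Write $N=|\cP|$ and $D=|\Delta(\cP)|$, and consider the count of \emph{isosceles quadruples}
\[Q=\#\{(p,q,p',q')\in\cP^4 : |p-q|^2=|p'-q'|^2\}.\]
Grouping these quadruples according to the common value of the squared distance and applying Cauchy--Schwarz gives $Q\ge (N^2)^2/D=N^4/D$. Hence it suffices to establish the matching upper bound $Q\ll N^3\log N$, since this forces $D\gg N/\log N$.

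To realize $Q$ geometrically, I would use the key observation of Elekes and Sharir. Parametrize the group of orientation-preserving rigid motions of the plane as a three-dimensional space. For an ordered pair $(p,p')\in\cP^2$, the set of motions $g$ with $g(p)=p'$ is a one-parameter family that becomes, in a suitable parametrization, a \emph{line} $\ell_{p,p'}\subset\RR^3$. Two such lines $\ell_{p,p'}$ and $\ell_{q,q'}$ meet precisely when a single motion sends $p\mapsto p'$ and $q\mapsto q'$, and this is possible exactly when $|p-q|=|p'-q'|$. Thus $Q$ is, up to constants, the number of coincidences among the $\approx N^2$ lines $\{\ell_{p,p'}\}$, weighted by intersection multiplicity. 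If $P_{\ge k}$ denotes the number of points lying on at least $k$ of these lines, then $Q$ is governed by $\sum_k k\,(P_{\ge k}-P_{\ge k+1})$, so the whole problem reduces to bounding the number of \emph{$k$-rich} points of a family of $\approx N^2$ lines in $\RR^3$.

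Next I would prove the required incidence bound by polynomial partitioning. The essential structural input is that the lines $\ell_{p,p'}$ are genuinely three-dimensional: the parametrization can be arranged so that at most $O(N)$ of them lie in any common plane, and at most $O(N)$ lie in any common \emph{regulus} (doubly-ruled quadric). One then selects a partitioning polynomial $f$ of controlled degree $d$ so that $\RR^3\setminus Z(f)$ splits into $\approx d^3$ open cells, each meeting only a small fraction of the lines. Inside a cell a single line can be incident to only boundedly many rich points, and double counting across the cells, combined with a dyadic decomposition over the richness parameter $k$, yields the target bound at the cost of a single logarithmic factor.

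The hardest part will be controlling the lines that are trapped on the zero set $Z(f)$ itself, where the cell-based counting gives nothing. Here one must invoke algebraic geometry: a line meeting $Z(f)$ in more than $d$ points must lie in $Z(f)$, and the lines contained in an algebraic surface are severely constrained by the Cayley--Salmon theory of ruled surfaces via the flecnode polynomial. I would argue that all but a negligible part of the incidences on $Z(f)$ are concentrated on its ruled components, and then use the hypotheses that few lines are coplanar and few are co-regular to bound the rich points supported on those components. Assembling the cell contribution and the surface contribution and summing dyadically over $k$ produces $Q\ll N^3\log N$, and therefore $|\Delta(\cP)|\gg N/\log N$. I expect this surface analysis, rather than the combinatorial partitioning, to be the main obstacle.
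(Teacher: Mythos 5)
You should first note a mismatch of expectations here: the paper does not prove this statement at all. It is quoted as known background from Guth and Katz \cite{GK}, and the present paper's contribution is built on top of it. So the only meaningful comparison is with the original proof in \cite{GK} --- and your outline is, in structure, exactly that proof: the Elekes--Sharir reduction of $|\Delta(\cP)|$ to counting isosceles quadruples, the reinterpretation of those quadruples as intersections among $\approx N^2$ lines in a three-dimensional parametrization of rigid motions, the non-degeneracy input (at most $O(N)$ of these lines in any plane or regulus), polynomial partitioning for the cellular part, and Cayley--Salmon/flecnode theory for the lines trapped in the zero set. Two cautions if you were to execute the plan. First, $Q$ counts \emph{pairs} of intersecting lines, so it is governed by $\sum_k k^2\,(P_{\ge k}-P_{\ge k+1})$, equivalently $\sum_k k\,P_{\ge k}$, not by $\sum_k k\,(P_{\ge k}-P_{\ge k+1})$ as you wrote; with the incidence bound $P_{\ge k}\ll N^3/k^2$ the harmonic sum $\sum_{k} k\cdot N^3/k^2$ is precisely where the $\log N$ enters, so this weighting is not cosmetic. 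Second, the line parametrization covers only motions with a fixed point, so translations (and degenerate quadruples with $p=q$) must be counted separately --- they contribute $O(N^3)$, which is harmless --- and Guth--Katz in fact treat the $2$-rich and the $k\ge 3$-rich points by genuinely different arguments: partitioning plus the plane condition for $k\ge 3$, and degree reduction plus ruled-surface (regulus) theory for $k=2$. These are exactly the obstacles you flag at the end, and filling them in constitutes essentially the whole of the original paper; as a blind reconstruction of the strategy, though, your proposal is faithful.
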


Though the problem of estimating the number of distinct distances is now almost resolved, what remains to be shown is a characterization of those point sets for which the lower bound of Theorem \ref{GuthKatz} is sharp. In this note, we think of an extremal configuration as a set of points $\cP$ spanning $O(|\cP|^{1+\eps})$ distances. The known examples of such sets all appear to come from sets with algebraic structure - in the grid example above, the points are coming from a lattice. Another example is to take the vertices of a regular $n$-gon, or $n$ equally spaced points on a line. One might ask if this algebraic structure is a necessary feature of such point sets - in the listed examples, a very strong rotational or translational symmetry is present. Such a conjecture was made by Erd\H{o}s in \cite{E2}, where he states that all extremal configurations should exhibit a lattice-like structure. Some progress towards such a result is given in \cite{SZZ}. Erd\H{o}s is pretty vague about what he means by lattice like, but we are going to take a lattice to mean a discrete subgroup of the plane. Here, we will prove a theorem in further support of this fact. Let $A$ be a set of real numbers such that $\cP=A\times A$ determines few distances. Theorem \ref{GuthKatz} says that there are at least $c|A|^2/\log |A|$ distinct distances for some positive, absolute constant $c$. We will show that when near this lower bound, the difference set \[D=A-A=\{a_1-a_2:a_1,a_2\in A\}\] has to be somewhat small, thus showing that the set $\cP$ is to some extent additively structured. It can certainly be argued that the assumption that $\cP$ is a cartesian product means that our point set is already lattice-like, but I think that the additive behaviour of $A$ is essential to be considered truly lattice-like. 

\begin{UnnumberedTheorem}[Main theorem]
Suppose $A$ is a finite set of real numbers and let $\Delta(A\times A)$ be the set of distances spanned by $A\times A$. Then
\[|A-A|\ll |\Delta(A\times A)||A|^{-1/8}.\] 
\end{UnnumberedTheorem}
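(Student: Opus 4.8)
The plan is to translate the problem about distances into a question about the additive energy of a set of squares, and then to bound that energy by exploiting that $D:=A-A$ is a difference set. First I would record the elementary identity
\[\Delta(A\times A)=\{(a_1-a_2)^2+(a_3-a_4)^2:a_i\in A\}=\{s^2+t^2:s,t\in D\}.\]
Since $D=-D$, the quantity $s^2$ ranges over $S:=\{d^2:d\in D\}$, so $\Delta(A\times A)=S+S$, and because squaring is two-to-one on $D\setminus\{0\}$ we have $|S|\asymp|D|=|A-A|$. Thus the Main Theorem is equivalent to the sumset lower bound $|S+S|\gg|S|\,|A|^{1/8}$. By the Cauchy--Schwarz inequality $|S+S|\ge|S|^4/E^{+}(S)$, where $E^{+}(S)=\#\{(s_1,s_2,s_3,s_4)\in S^4:s_1+s_2=s_3+s_4\}$, so it suffices to prove the energy bound $E^{+}(S)\ll|S|^3|A|^{-1/8}$; up to the harmless sign ambiguity this is the statement that
\[E:=\#\{(d_1,d_2,d_3,d_4)\in D^4:d_1^2+d_2^2=d_3^2+d_4^2\}\ll|D|^3|A|^{-1/8}.\]
In words, the squares of a difference set must have nearly minimal additive energy.

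To bound $E$ I would factor the defining relation as $(d_1-d_3)(d_1+d_3)=(d_4-d_2)(d_4+d_2)$. Since $D=-D$, each factor lies in $D+D$, and the map $(d_1,d_3)\mapsto(d_1-d_3,d_1+d_3)$ is injective; hence $E$ is controlled by a multiplicative energy of $D+D$, the number of solutions of $uv=wx$ with the four entries in $D+D$. The naive bound $E\le E^{\times}(D+D)$ is far too weak, because the passage from $D$ to $D+D$ destroys the very structure we wish to use, so the heart of the argument is to keep track of the coupling between the two factors: the admissible pairs $(u,v)=(d_1-d_3,d_1+d_3)$ do not fill out $(D+D)^2$ but form an additively structured subset, since both $(u+v)/2$ and $(v-u)/2$ must lie in $D=A-A$. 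The plan is to run a Szemer\'edi--Trotter/Solymosi-type estimate on this constrained configuration, feeding in the additive structure of $A$ --- quantitatively, the large additive energy $E^{+}(A)\ge|A|^4/|A-A|$ forced by $D$ being a difference set --- and to balance the resulting exponents; this is where the saving $|A|^{1/8}$ should appear, and it is also the step I expect to be the main obstacle, precisely because of the tension between the expansion from $D$ to $D+D$ and the need to retain the additive information coming from $A$.

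Finally I would dispose of the complementary regime cleanly. If $A-A$ is small, the bound is immediate: the Guth--Katz theorem (Theorem \ref{GuthKatz}) applied to the $|A|^2$ points of $A\times A$ gives $|\Delta(A\times A)|\gg|A|^2/\log|A|$, which already exceeds $|A-A|\,|A|^{1/8}$ once $|A-A|\ll|A|^{15/8}/\log|A|$. Thus the genuinely hard case is when $A-A$ is nearly as large as possible, i.e.\ when $A$ is close to a Sidon set and therefore has minimal additive energy --- which is exactly the configuration in which one expects the squares $\{d^2:d\in D\}$ to be additively unstructured, making the energy estimate of the previous paragraph plausible. The delicate point throughout is that the additive energy of a set of squares is governed by concentric circles through a grid, a situation to which the Szemer\'edi--Trotter theorem does not apply directly; circumventing this via the factorization above, and extracting exactly a power $|A|^{1/8}$ from the interplay of the sum--product phenomenon with the difference-set hypothesis, is the crux of the proof.
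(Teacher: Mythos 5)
Your proposal sets up the right objects but stops short of a proof precisely where the theorem lives. The reduction of the statement to $|S+S|\gg |S||A|^{1/8}$ with $S=\{d^2:d\in D\}$ is correct, but everything after that is a plan rather than an argument: the step in which the factorization $(d_1-d_3)(d_1+d_3)=(d_4-d_2)(d_4+d_2)$ is to be combined with a Szemer\'edi--Trotter/Solymosi-type estimate is exactly where the exponent $1/8$ would have to be produced, and you explicitly leave it open (calling it ``the main obstacle''). No inequality with any exponent is ever derived, so there is nothing to verify at the crux. There is also a structural loss earlier in your reduction: passing from the sumset bound to the energy bound $E^{+}(S)\ll |S|^3|A|^{-1/8}$ via Cauchy--Schwarz replaces the theorem by a strictly stronger statement. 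A large sumset does not force small energy (a small, highly structured subset of $S$ can dominate $E^{+}(S)$ while barely affecting $|S+S|$), so you may have reduced to a statement that is harder than --- and conceivably false for --- the sets at hand; the paper neither proves nor needs any energy estimate.

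For contrast, the paper's proof runs entirely at the level of sumsets and needs three inputs absent from your outline. First, the differencing identity (Lemma \ref{Differencing}): if $d_1=a_2-a_1$ and $d_2=b_1-b_2$ lie in $D$, then $(b_1-a_1)^2+(b_2-a_2)^2-(b_1-a_2)^2-(b_2-a_1)^2=2d_1d_2$, whence $2\cdot D\cdot D\subset 2D^2-2D^2$; this converts products of differences into short signed sums of genuine squared distances, which is much stronger than your factorization (it lands in iterated sums and differences of $\Delta$ itself, not merely in products of elements of $D+D$). Second, Pl\"unnecke--Ruzsa (Theorem \ref{PlunneckeRuzsa}) then gives the upper bound $|D\cdot D+D\cdot D|\leq |4D^2-4D^2|\leq |\Delta|^8/|D|^7$. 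Third, Solymosi's argument (Corollary \ref{ProductSumset}) combined with Ungar's slope theorem (Theorem \ref{Slopes}) --- the slopes spanned by $A\times A$ are exactly $D/D$, so $|D/D|\gg|A|^2$ --- gives the lower bound $|D\cdot D+D\cdot D|\gg |D||D/D|^{1/2}\gg |D||A|$. Comparing the two bounds yields $|D|^8\ll|\Delta|^8|A|^{-1}$, which is the theorem, with the exponent $1/8$ arising transparently from the eighth power in Pl\"unnecke--Ruzsa. Your instinct that Solymosi-style sum--product ideas are relevant is correct, but without the differencing identity, Pl\"unnecke--Ruzsa, and Ungar's theorem, the quantitative saving never materializes.
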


In \cite{E2}, it was conjectured that in extremal point configurations there should be many points ($|\cP|^{1/2}$ is conjectured, but $|\cP|^\eps$ is already interesting) on a line or circle. In the case of a cartesian product, there are trivially many points on axis parallel lines, or the main diagonal. However, this theorem also shows that there are non-trivial lines which contain many points.

\begin{UnnumberedTheorem}[Rich lines]
Suppose $A$ is a finite set of real numbers such that $\Delta(A\times A)=O(|A|^2)$. There is a non-trivial line (in fact many) which contains $\Omega(|A|^{1/8})$ points of $A\times A$.
\end{UnnumberedTheorem}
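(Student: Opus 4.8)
The plan is to obtain the Rich lines theorem as a short consequence of the Main theorem together with a popularity argument on the difference set; essentially all of the difficulty has already been absorbed into the Main theorem. First I would feed the hypothesis $\Delta(A\times A)=O(|A|^2)$ into the Main theorem to conclude
\[|A-A|\ll |\Delta(A\times A)|\,|A|^{-1/8}\ll |A|^{2-1/8}=|A|^{15/8}.\]
So the differences of $A$ are forced to be concentrated, and the whole point of the argument is to convert this concentration into a single line carrying many points of $A\times A$.

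For $d\in A-A$ let $r(d)=|\{(u,v)\in A\times A:u-v=d\}|$ count the representations of $d$ as a difference. The key structural observation is that the points of $A\times A$ lying on the line $\ell_d=\{(x,y):x-y=d\}$ are exactly the pairs $(u,v)\in A\times A$ with $u-v=d$, so $\ell_d$ contains \emph{precisely} $r(d)$ points of $A\times A$. Every such line has slope $1$, and the only one among them that is a trivial line, in the sense of the introduction, is $\ell_0$, the main diagonal. Hence it suffices to exhibit values $d\neq 0$ for which $r(d)\gg |A|^{1/8}$.

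To produce such $d$ I would average. Since $\sum_{d\in A-A}r(d)=|A|^2$ and $r(0)=|A|$, the nonzero differences carry $S:=\sum_{d\neq 0}r(d)\geq \tfrac12|A|^2$ of the total, spread over $N:=|A-A|-1\ll |A|^{15/8}$ values; thus their mean $\mu=S/N$ satisfies $\mu\gg |A|^{1/8}$. The differences with $r(d)<\mu/2$ contribute at most $(\mu/2)N=S/2$ to the sum, so those with $r(d)\geq \mu/2$ contribute at least $S/2$; since each contributes at most the trivial amount $r(d)\leq |A|$, there must be at least $S/(2|A|)\gg |A|$ of them. Each such $d\neq 0$ then yields a non-trivial line $\ell_d$ carrying $r(d)\geq \mu/2\gg |A|^{1/8}$ points of $A\times A$, which simultaneously gives the existence statement and the ``in fact many'' refinement.

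The main obstacle here is conceptual rather than computational: recognizing that a small difference set must produce popular differences, and that a popular difference translates into a rich slope-$1$ line rather than merely a rich family of parallel lines. This shortcut works precisely because the level sets $\{x-y=d\}$ of the difference pairing are themselves single lines, so no incidence-geometric input is needed beyond the Main theorem. Were one to seek rich lines of arbitrary slope, a popular difference \emph{vector} in $(A-A)\times(A-A)$ would only pin points onto parallel lines, and recovering a single rich line would cost an additional pigeonhole factor; the slope-$1$ formulation neatly avoids this loss.
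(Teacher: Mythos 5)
Your proof is correct and follows essentially the same route as the paper: feed the hypothesis into the Main theorem to get $|A-A|\ll |A|^{15/8}$, then pigeonhole on the identity $|A|^2=\sum_{d\in A-A}r(d)$ to find a popular difference $d$, whose level line $x-y=d$ is the rich line. Your write-up is in fact slightly more careful than the paper's one-line argument, since you explicitly exclude $d=0$ (the trivial diagonal) and substantiate the ``in fact many'' claim by counting $\gg|A|$ popular differences.
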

\begin{proof}
Since \[|A|^2=\sum_{d\in A-A}\sum_{\substack{a,b\in A\\a-b=d}}1,\] one of the inner sums is at least $\Omega(|A|^{1/8})$. Thus there are $\Omega(|A|^{1/8})$ points $(a,b)\in A\times A$ with $a-b=d$, and this equation defines the desired line.
\end{proof}

From an arithmetic combinatorial point of view, our main theorem ought to be true because the set of distances spanned by $\cP$ is the set of numbers $\Delta=D^2+D^2$, where $D^2=\{d^2:d\in D\}$. Heuristically, the squaring of an additively structured set (in this case $D$, a difference set) should result in a set (in this case $D^2$) which is not additively structured. The fact that squaring, or in fact any convex function, meddles with the additive properties of a set of numbers is captured in a theorem of Elekes-Nathanson-Ruzsa from \cite{ENR}, later improved by Li and Roche-Newton \cite{LRN}.

\begin{Theorem}[Convexity and Sumsets]\label{ENR}
Let $S$ be a finite set of real numbers, let $f$ be a strictly convex function and let $\eps>0$. Then
\[\max\{|S+S|,|f(S)+f(S)|\}\gg|S|^{14/11-\eps}.\] In particular,
\[\max\{|S+S|,|S^2+S^2|\}\gg|S|^{14/11-\eps}.\]
\end{Theorem}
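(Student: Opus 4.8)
The plan is to use the fact that the graph of a strictly convex $f$ is a set of points in \emph{strictly convex position}, and to translate the hypothesis that $|S+S|$ and $|f(S)+f(S)|$ are \emph{both} small into a statement about additive energies which convexity forbids. First I would reduce to the case that $f$ is strictly increasing: since $f'$ is monotone, $f$ is unimodal, so restricting to whichever side of the minimum contains at least half of $S$ I may assume $f$ is strictly increasing and strictly convex, at the cost of only a constant factor in the sumset sizes. Write $n=|S|$, $K=|S+S|$, $L=|f(S)+f(S)|$, $B=f(S)$, and let $G=\{(s,f(s)):s\in S\}$, a set of $n$ points in strictly convex position.

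The structural heart of the matter is a rigidity property: for strictly convex $f$, the map $a\mapsto f(a)+f(\sigma-a)$ is strictly convex, hence at most two-to-one, so a vector sum $p+q$ with $p,q\in G$ determines the unordered pair $\{p,q\}$ up to $O(1)$ choices. Thus the representation function of $G+G$ is $O(1)$, and $G+G$ consists of $\gg n^2$ distinct points lying in the grid $(S+S)\times(f(S)+f(S))$. The number of these points on the vertical line $x=\sigma$ is exactly $r_S(\sigma):=|\{(a,b)\in S^2:a+b=\sigma\}|$, and on the horizontal line $y=\tau$ it is $r_B(\tau)$; so by Cauchy--Schwarz $K\ge n^4/E(S)$ and $L\ge n^4/E(B)$, where $E(\cdot)$ is the additive energy. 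Hence $\max\{K,L\}\ge n^4/\min\{E(S),E(B)\}$. The barrier to overcome is that rigidity alone only yields the trivial $KL\gg n^2$; what the theorem really asserts is that $S$ and $f(S)$ cannot \emph{both} be strongly additively structured, and I would quantify this by bounding the smaller energy.

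To do so I would exploit a second consequence of convexity beyond rigidity, namely an ordering: for a fixed difference $x$, the map $a\mapsto f(a)-f(a-x)$ is strictly increasing on $S$, so pairs realising a given difference in $S$ realise distinct and monotonically ordered differences in $B$. This ordering is exactly the feature that the Szemer\'edi--Trotter theorem can see: applying it to the pseudoline family $C_\sigma=\{(x,f(x-\sigma)+f(\sigma)):x\in\RR\}$ of translates of the graph of $f$ (which meet pairwise at most once), and organising the popular sums and differences dyadically, lets one bound a higher moment such as the third energy $E_3(S)=\sum_x r_{S-S}(x)^3$ nontrivially. A H\"older interpolation $E(S)^2\le n^2E_3(S)$ then feeds such a bound back into the estimate for $\min\{E(S),E(B)\}$, and balancing the dyadic parameters against both $K$ and $L$ produces the exponent $14/11$.

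I expect the \textbf{main obstacle} to be precisely this last step. The difficulty is that the naive incidence count for the family $\{C_\sigma\}$ is vacuous---it merely reproduces $KL\gg n^2$---so the gain has to come from making the convex ordering visible to the incidence theorem, which requires a careful dyadic organisation of the popular sums and differences and a coupling of the two energies rather than a bound on either one in isolation. Pinning down the exact constants in this balancing, and absorbing the resulting logarithmic factors into the $\eps$, is the delicate optimisation carried out by Li and Roche-Newton building on the higher-energy method of Schoen and Shkredov, and I would follow their bookkeeping to reach $|S|^{14/11-\eps}$.
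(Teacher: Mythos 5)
First, a point of orientation: the paper does not prove this theorem at all --- it is quoted as an imported result, with the statement attributed to Elekes--Nathanson--Ruzsa \cite{ENR} and the exponent $14/11$ to Li and Roche-Newton \cite{LRN}, and it is not even used in the proof of the main theorem (which goes through Solymosi's lemma and Ungar's theorem instead). So the only fair comparison is with the cited works, whose general strategy your sketch correctly identifies: Szemer\'edi--Trotter applied to translates of the convex graph, higher energies, and dyadic pigeonholing in the style of Schoen and Shkredov. As a proof attempt, however, it has a genuine gap: the decisive quantitative step is never carried out, and you say so yourself (\qu{I would follow their bookkeeping}). As written, your argument establishes nothing beyond the trivial bound $KL\gg n^2$: the two-to-one rigidity observation gives exactly that, and your incidence setup uses only the one-parameter family $C_\sigma$, which, as you concede, is vacuous. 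In particular you do not even recover the original Elekes--Nathanson--Ruzsa exponent $5/4$, which requires the \emph{two-parameter} family $\{y=f(x-a)+f(a'):a,a'\in S\}$: these are $\approx n^2$ curves, pairwise intersecting at most once, each containing the $n$ grid points $(s+a,\,f(s)+f(a'))$ of $(S+S)\times(f(S)+f(S))$, so Szemer\'edi--Trotter gives $n^3\ll (KL)^{2/3}n^{4/3}+KL+n^2$, hence $KL\gg n^{5/2}$ and $\max\{K,L\}\gg n^{5/4}$. The absence of this family is the concrete missing idea already at the $5/4$ level.

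At the $14/11$ level the gap is deeper, and one of your intermediate claims would fail as stated: the incidence theorem cannot \qu{bound $E_3(S)$ nontrivially} for a general finite $S$, since for an arithmetic progression $E_3(S)\sim n^4$ is of maximal order; any usable bound must carry $K$ and $L$ on the right-hand side, i.e.\ it is intrinsically a coupled inequality (you gesture at this with \qu{a coupling of the two energies}, but the sentence before it asserts the impossible uncoupled version). What Li and Roche-Newton actually prove is an asymmetric inequality roughly of the shape $K^aL^b\gg n^{14}\log^{-2}n$ with $a+b=11$, obtained by running the dyadic decomposition of popular sums through the two-parameter curve family; the H\"older step $E(S)^2\le n^2E_3(S)$ you quote is correct but is only one strand of that derivation. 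Your supporting observations are sound --- the reduction to monotone $f$, the strict convexity of $a\mapsto f(a)+f(\sigma-a)$ and the resulting $O(1)$ representation function of $G+G$ (up to the harmless slip that the vertical fibre has between $r_S(\sigma)/2$ and $r_S(\sigma)$ points, not exactly $r_S(\sigma)$), the Cauchy--Schwarz bounds $K\ge n^4/E(S)$ and $L\ge n^4/E(B)$, and the monotone ordering of $a\mapsto f(a)-f(a-x)$ --- but the theorem's entire content, the passage from the trivial bound to $14/11$, is cited rather than proved. The verdict is therefore: a faithful roadmap of the known argument, incomplete rather than wrong, and appropriately so given that the paper itself treats the statement as a black box.
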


Viewed in this way, the main theorem is really one in arithmetic combinatorics. Recently, Shkredov has proved similar theorems, examining the multiplicative structure (or lack thereof) of difference sets, see \cite{S}.

We close this introduction by remarking that if a well-known conjecture of Rudin holds (this form is due to Ruzsa, \cite{CG}), then a very strong improvement of the main theorem can be made to points with integral co-ordinates.

\begin{Conjecture}[Rudin/Ruzsa]
For $\eps>0$, if $S$ is a set of perfect squares then $|S+S|\gg_\eps |S|^{2-\eps}$. 
\end{Conjecture}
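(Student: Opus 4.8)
The plan is to pass to additive energy. Write $S=\{a^2:a\in A\}$ for a set $A$ of nonnegative integers with $|A|=|S|=N$, so that the conjectured bound reads $|A^2+A^2|\gg_\eps N^{2-\eps}$. The Cauchy--Schwarz inequality gives $|S+S|\ge |S|^4/E(S)$, where
\[E(S)=\#\{(a,b,c,d)\in A^4:a^2+b^2=c^2+d^2\}\]
is the additive energy of $S$. Thus it would suffice to establish the energy bound $E(S)\ll_\eps N^{2+\eps}$, and the whole problem collapses to controlling the number of solutions of the single Diophantine equation $a^2+b^2=c^2+d^2$ with all four variables confined to the prescribed set $A$. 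For $A=\{1,\dots,N\}$ this energy is $N^{2+o(1)}$, so the target exponent is sharp and cannot be improved by this route.

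The natural tool for the equation is factorisation in the Gaussian integers $\ZZ[i]$, since $a^2+b^2=|a+bi|^2$. A solution with common value $n=a^2+b^2=c^2+d^2$ records two representations of $n$ as a norm, i.e.\ it places $a+bi$ and $c+di$ on the circle of radius $\sqrt n$. Organising the count by $n$ gives $E(S)=\sum_n r_A(n)^2$, where $r_A(n)=\#\{(a,b)\in A^2:a^2+b^2=n\}$ counts representations drawn from $A$. The guiding heuristic is that $r_A(n)$ is $O(1)$ for typical $n$, with the large fibres governed by the number of factorisations of $n$ in $\ZZ[i]$, which is $n^{o(1)}$; if the rare large fibres can be shown to contribute negligibly, summing would yield $E(S)\ll N^{2+\eps}$ as required.

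The main obstacle is precisely that the four variables are restricted to an \emph{arbitrary} set $A$ rather than to an interval. The classical estimates for sums of two squares count representations of all integers up to a bound, and they do not survive this localisation: an adversarial $A$ could in principle concentrate its colliding pairs, and ruling this out seems to demand a genuinely new input tying the \emph{additive} equation $a^2+b^2=c^2+d^2$ to the \emph{multiplicative} factorisation structure of $\ZZ[i]$. This is exactly where the difficulty lies, and why the statement remains a conjecture rather than a theorem. The convexity method of Theorem \ref{ENR} exploits only the strict convexity of squaring and is currently confined to exponents bounded well away from $2$, far short of what is needed; I would expect real progress to require either bounds on the multiplicative energy of difference sets, in the spirit of Shkredov's work \cite{S}, or a new estimate for the fibres $r_A(n)$ that is uniform over every choice of $A$.
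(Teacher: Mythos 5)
The statement you set out to prove is not proved in the paper, and could not be: it is the Rudin--Ruzsa conjecture, quoted from \cite{CG} and invoked only conditionally, to note that for $A\subset\ZZ$ it would give $|D|=O(|A|^{1+\delta})$. Your write-up, to its credit, ends by acknowledging that the statement remains open, so the honest verdict is that you have a programme, not a proof --- and the programme contains a genuine gap of principle. The energy bound $E(S)\ll_\eps N^{2+\eps}$ to which you ``collapse'' the problem is precisely the open $\Lambda(4)$ form of Rudin's conjecture for the squares, and it is at least as strong as the sumset statement. Cauchy--Schwarz, $|S+S|\ge |S|^4/E(S)$, transfers information in one direction only: a small sumset forces large energy, but large energy does not force a small sumset, since $E(S)$ can be inflated by a few popular fibres $r_A(n)$ while $|S+S|$ stays near-maximal. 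It is therefore conceivable that the sumset conjecture is true while your target energy bound is false; the reduction trades the conjecture for a possibly harder one rather than making progress on it.

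Your diagnosis of where the difficulty lives is also slightly misplaced. The Gaussian-integer divisor bound $r(n)\le n^{o(1)}$ holds for every individual $n$, so it survives restriction to an arbitrary set $A$ perfectly well; what it does not survive is elements of superpolynomial size. Indeed, if $A\subset[1,N^{C}]$ for fixed $C$, then every fibre value $n=a^2+b^2\le 2N^{2C}$ satisfies $r_A(n)\le r(n)\le N^{o(1)}$, whence $E(S)=\sum_n r_A(n)^2\le\bigl(\max_n r_A(n)\bigr)\sum_n r_A(n)\le N^{2+o(1)}$, and your argument already proves the conjecture for polynomially bounded sets --- a known observation. The genuinely open case is when elements of $A$ are allowed to be of size $N^{c\log\log N}$ and beyond: there an integer $n$ that is a product of roughly $\log N$ primes congruent to $1$ modulo $4$ has $N^{\Omega(1)}$ representations as a sum of two squares, and taking $A$ to be the legs of such an $n$ shows a single fibre $r_A(n)$ can be of size comparable to $N$. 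Whether many such fat fibres can coexist inside one small set $A$ is exactly the content of the conjecture, and neither your factorisation heuristic nor the convexity bound of Theorem \ref{ENR} says anything about that regime.
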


Since $D=A-A\subset \ZZ$, $D^2$ is a set of perfect squares. It follows that $\Delta(A\times A)\gg_\eps |D|^{2-\eps}$. If $\Delta(A\times A)=O(|A|^2)$ then $|D|=O(|A|^{1+\delta})$ for some $\delta\to 0$ as $\eps\to 0$.

\section{Proofs}

The fundamental observation in this paper is a simple one. By taking particular elements of \[2D^2-2D^2=\{d_1^2+d_2^2-d_3^2-d_4^2:d_1,d_2,d_3,d_4\in D\}\] we can find a dilated copy of $D\cdot D$.

\begin{Lemma}\label{Differencing}
If $D$ is a difference set then $2D^2-2D^2$ contains a dilate of the set $D\cdot D$.
\end{Lemma}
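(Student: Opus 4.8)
The plan is to read the statement arithmetically: writing $D^2=\{d^2:d\in D\}$, the set $2D^2-2D^2=\{d_1^2+d_2^2-d_3^2-d_4^2:d_i\in D\}$ is built from sums and differences of squares, while $D\cdot D=\{dd':d,d'\in D\}$ is a product set. The natural bridge between these two worlds is a polarization identity, since such identities turn sums and differences of squares into products. So I would look for an algebraic identity expressing a multiple $\lambda dd'$ of an arbitrary product as a signed sum of four squares, each of whose bases is forced to lie in $D$; the factor $\lambda$ will be the dilation in the statement.

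The first thing I would try is the elementary polarization $2xy=x^2+y^2-(x-y)^2$. Setting $x,y\in D$ writes $2xy$ with three squares, but the base $x-y$ is a difference of two elements of $D$ and need not lie in $D=A-A$. One can repair this by demanding that the two factors share a common point: for $a,b,c\in A$ one has the valid identity \[2(a-b)(a-c)=(a-b)^2+(a-c)^2-(b-c)^2,\] in which all three bases $a-b,\,a-c,\,b-c$ genuinely lie in $D$. However, this only realizes products $(a-b)(a-c)$ of differences anchored at a common element $a$, which is a proper (and uncontrolled) subset of $D\cdot D$, not a dilate of the full product set. This tells me that three squares are not enough and that I must use the complete four-square structure of $2D^2-2D^2$ to let both factors range freely over $D$.

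The fix is to parametrize a general product by two independent differences. Given $d,d'\in D$, write $d=a-c$ and $d'=e-b$ with $a,b,c,e\in A$, which is possible precisely because $D=A-A$. Then I would verify the four-square identity \[(a-b)^2+(c-e)^2-(a-e)^2-(c-b)^2=2(a-c)(e-b),\] whose four bases $a-b,\,c-e,\,a-e,\,c-b$ are all differences of elements of $A$ and hence all lie in $D$. The right-hand side equals $2dd'$, so $2dd'\in 2D^2-2D^2$; as $d,d'$ range over all of $D$ this shows $2(D\cdot D)\subseteq 2D^2-2D^2$, exactly the claimed dilate with $\lambda=2$. The genuine obstacle here is the discovery step: finding the correct combination of four squares so that every base remains inside $D$ while the two recovered factors $a-c$ and $e-b$ stay completely independent. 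Once the right pairing is guessed, the verification is a one-line expansion, and matching representations of $d$ and $d'$ back to $A-A$ is immediate from the definition of a difference set.
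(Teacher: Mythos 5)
Your proof is correct and is essentially the same as the paper's: after renaming variables ($a,c,e,b$ here versus $a_2,a_1,b_1,b_2$ there), your four-square identity $(a-b)^2+(c-e)^2-(a-e)^2-(c-b)^2=2(a-c)(e-b)$ is exactly the paper's identity, since each square is unchanged by negating its base. Both arguments exhibit $2\cdot(D\cdot D)$ inside $2D^2-2D^2$ via this polarization with two independent differences.
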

\begin{proof}
Let $D=A-A$ and suppose $d_1=a_2-a_1$ and $d_2=b_1-b_2\in D$. Then
\[(b_1-a_1)^2+(b_2-a_2)^2-(b_1-a_2)^2-(b_2-a_1)^2=2d_1d_2.\] The left hand side clearly shows this element of $2\cdot D\cdot D$ belongs to $2D^2-2D^2$.
\end{proof}

The other essential ingredient is the Plunnecke-Ruzsa Theorem, which is nicely proved in \cite{P}.
\begin{Theorem}[Plunnecke-Ruzsa]\label{PlunneckeRuzsa}
Suppose $S$ is a finite subset of an abelian group. Then
\[|mS-nS|\leq\lr{\frac{|S+S|}{|S|}}^{n+m}|S|.\]
\end{Theorem}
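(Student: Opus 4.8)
The plan is to deduce the statement from two independent ingredients: an elementary ``triangle'' injection that reduces the signed sumset $mS-nS$ to ordinary sumsets, and the Pl\"unnecke--Petridis inequality, which controls the ordinary sumsets $hS$ on a cleverly chosen subset of $S$. Throughout write $K=|S+S|/|S|$. First I would record the triangle estimate
\[|mS-nS|\,|X|\le |X+mS|\,|X+nS|,\]
valid for every finite set $X$. This is Ruzsa's triangle inequality, proved by a direct injection: for each $d\in mS-nS$ fix one representation $d=\beta-\gamma$ with $\beta\in mS$ and $\gamma\in nS$, and send $(d,x)\mapsto(x+\beta,x+\gamma)\in(X+mS)\times(X+nS)$; since $(x+\beta)-(x+\gamma)=d$ recovers $d$, hence $\beta,\gamma$, hence $x$, the map is injective. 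With this in hand the entire problem is reduced to producing a nonempty $X\subseteq S$ on which $|X+hS|\le K^h|X|$ for every $h\ge 0$.

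The heart of the argument, and the step I expect to be the main obstacle, is the Pl\"unnecke--Petridis lemma: if $X\subseteq S$ is a nonempty subset minimizing the ratio $|Y+S|/|Y|$ over all nonempty $Y\subseteq S$, and $K'$ denotes this minimal value, then
\[|X+S+C|\le K'|X+C|\]
for every finite set $C$. I would prove this by induction on $|C|$, the case $|C|=1$ being the defining equality $|X+S|=K'|X|$. For the inductive step write $C=C'\cup\{\gamma\}$ with $\gamma\notin C'$ and set $X'=\{x\in X:x+\gamma\in X+C'\}$, so that $|X+C|=|X+C'|+|X|-|X'|$. The key observation is that the genuinely new part $(X+S+\gamma)\setminus(X+S+C')$ lies in $\big((X+S)\setminus(X'+S)\big)+\gamma$, since every $x\in X'$ satisfies $x+S+\gamma\subseteq X+C'+S$. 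Hence this part has size at most $|X+S|-|X'+S|$, and here minimality enters in the favourable direction: $|X'+S|\ge K'|X'|$ while $|X+S|=K'|X|$, so the new part has size at most $K'(|X|-|X'|)$. Combining with the inductive hypothesis $|X+S+C'|\le K'|X+C'|$ yields precisely $|X+S+C|\le K'|X+C|$.

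With the lemma established I would iterate it by taking $C=(h-1)S$, obtaining $|X+hS|\le K'|X+(h-1)S|$ and hence $|X+hS|\le (K')^h|X|$ for all $h\ge 0$. Because $X=S$ is itself an admissible competitor in the minimization, $K'\le |S+S|/|S|=K$; and $|X|\le|S|$ since $X\subseteq S$. Feeding the bounds $|X+mS|\le (K')^m|X|$ and $|X+nS|\le (K')^n|X|$ into the triangle estimate gives
\[|mS-nS|\le\frac{(K')^m|X|\cdot(K')^n|X|}{|X|}=(K')^{m+n}|X|\le K^{m+n}|S|,\]
which is the claimed inequality.

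Two points will require care. First, the minimizing subset $X$ must range over all nonempty subsets of $S$, not just $S$ itself; it is exactly this freedom that makes the inductive step close, while the final replacements of $K'$ by $K$ and of $|X|$ by $|S|$ both move in the favourable direction and so cost nothing. Second, one should verify that the triangle injection handles the signed combination $mS-nS$ directly, so that no separate treatment of $-S$, and in particular no a priori control of $|S-S|$, is ever needed: the only sumset quantities entering are $|X+mS|$ and $|X+nS|$, both supplied by the Pl\"unnecke--Petridis lemma.
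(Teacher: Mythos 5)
Your proposal is correct, and it coincides with the proof the paper itself points to: the paper states the theorem without proof and cites Petridis \cite{P}, whose argument is exactly yours --- Ruzsa's triangle inequality $|mS-nS|\,|X|\le|X+mS|\,|X+nS|$ combined with the minimizing-subset lemma $|X+S+C|\le K'|X+C|$ proved by induction on $|C|$, then iterated with $C=(h-1)S$. All the delicate points check out, including the containment of the new part in $\bigl((X+S)\setminus(X'+S)\bigr)+\gamma$, the use of minimality on $X'\subseteq X\subseteq S$ (with the empty-$X'$ case trivial), and the final favourable replacements $K'\le K$ and $|X|\le|S|$.
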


Combining these facts gives a number of options, and I am certain that there is a more efficient idea that could lead to a quantitative improvement of the main theorem. In particular, avoiding such a large exponent in the use of Theorem \ref{PlunneckeRuzsa} could drastically improve the final result. In a first iteration of this paper, we would have used that $2D^2-2D^2$ contains a dilate of $D$. Then because of Theorem \ref{ENR}, $4D^2-4D^2$ (which contains a dilate of $D+D$) is bounded from below in terms of $|D|$. We can get a better exponent by instead using an idea of Solymosi from his well-known work on the Sum-Product problem, \cite{So}.

\begin{Lemma}\label{Solymosi}
Let $\cL$ be a collection of $L$ lines in the plane which pass through the origin. Let $\cP$ be a collection of points lying in a single quadrant in the plane, and such that each line $l\in \cL$ contains at least $n$ points from $\cP$. Then
\[|\cP+\cP|\geq (L-1)n^2.\]
\end{Lemma}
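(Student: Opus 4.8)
The plan is to exploit an angular ordering of the lines, following Solymosi's sum-product argument. Since every point of $\cP$ lies in a single quadrant, each line $l\in\cL$ meets $\cP$ in a set of points on a single ray emanating from the origin, so I can sort the lines $l_1,\dots,l_L$ by increasing angle (equivalently, slope) with the positive axis. Writing $\cP_i=\cP\cap l_i$, the hypothesis gives $|\cP_i|\geq n$ for each $i$, and $\cP+\cP\supseteq\cP_i+\cP_j$ for every pair.

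First I would record that for any two \emph{distinct} lines $l_i,l_j$ through the origin, the sumset $\cP_i+\cP_j$ has exactly $|\cP_i|\cdot|\cP_j|$ elements. Indeed, the direction vectors of $l_i$ and $l_j$ are linearly independent, so a point of the plane has at most one representation as a sum of a point on $l_i$ and a point on $l_j$; hence the sumset map is injective. In particular $|\cP_i+\cP_{i+1}|\geq n^2$ for each of the $L-1$ consecutive pairs.

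The heart of the matter is a disjointness claim: the sumsets $\cP_i+\cP_{i+1}$, for $i=1,\dots,L-1$, are pairwise disjoint subsets of $\cP+\cP$. To establish this I would track angles. Let $\theta_i$ denote the common angle of the points of $\cP_i$, so that $\theta_1<\theta_2<\dots<\theta_L$. A point $p\in\cP_i$ and a point $q\in\cP_{i+1}$ both have positive coordinates, and the sum of two positive vectors making angles $\theta_i<\theta_{i+1}$ makes an angle lying strictly between $\theta_i$ and $\theta_{i+1}$. Thus $\cP_i+\cP_{i+1}$ is contained in the open angular sector $(\theta_i,\theta_{i+1})$, and for consecutive indices these open sectors are disjoint. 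Summing over the $L-1$ pairs then gives $|\cP+\cP|\geq\sum_{i=1}^{L-1}|\cP_i+\cP_{i+1}|\geq(L-1)n^2$, as required.

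The main obstacle is making the angular disjointness rigorous, namely verifying that the sum of two positive vectors with angles $\theta_i<\theta_{i+1}$ has angle strictly in the open interval $(\theta_i,\theta_{i+1})$. This is precisely where the single-quadrant hypothesis is used: it forces all relevant coordinates to be positive, so that the angle of a sum behaves monotonically and the sectors genuinely do not overlap. Once this geometric step is pinned down, the remaining counting is immediate.
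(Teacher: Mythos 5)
Your proof is correct and is essentially identical to the paper's: sort the lines by slope, note that linear independence makes each adjacent pair $l_i,l_{i+1}$ produce $n^2$ distinct sums, and observe that these sums lie strictly between the two lines, so the sumsets from distinct adjacent pairs are disjoint, giving $(L-1)n^2$ in total. The only difference is presentational --- you spell out the paper's ``lies between $l_1$ and $l_2$'' step as an explicit angular-sector argument using the single-quadrant (positivity) hypothesis, which is exactly the role that hypothesis plays in the paper's proof as well.
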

\begin{proof}
Sort the lines in $\cL$ by increasing slope. Let $l_1$ and $l_2$ be adjacent lines. Then the (vector) sum of any point on $l_1$ and any point on $l_2$ lies between $l_1$ and $l_2$. This means that all sums of points in $\cP$ coming from $l_1$ and $l_2$ are distinct from the sums coming from any other two adjacent lines in $\cL$. Moreover, all such sums are distinct by linear independence, so any two adjacent lines produce $n^2$ sums. Since there are $L-1$ adjacent pairs, the lemma follows. 
\end{proof}

\begin{Corollary}\label{ProductSumset}
Let $S$ be a set of real numbers and let $S/S=\{s_1/s_2:s_1,s_2\in S\}$. Then \[|S\cdot S+S\cdot S|^2\gg |S/S||S|^2.\]
\end{Corollary}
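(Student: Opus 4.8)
The plan is to deduce Corollary~\ref{ProductSumset} from Lemma~\ref{Solymosi} by choosing the right point set and family of lines. The key observation is that the ratio set $S/S$ naturally parametrizes lines through the origin: each quotient $q = s_1/s_2 \in S/S$ determines the line $y = qx$, and a pair $(s_2, s_1) \in S \times S$ with $s_1/s_2 = q$ sits on this line. So I would take $\cL$ to be the family of lines of slope $q$ for $q \in S/S$, giving $L = |S/S|$ lines. To feed Lemma~\ref{Solymosi}, I want many points of a fixed set $\cP$ on each such line.

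The natural choice is $\cP = (S \cdot S) \times (S \cdot S)$, or more precisely a point set whose coordinatewise sums are governed by $|S\cdot S + S \cdot S|$. Here is the mechanism I would use: fix a slope $q = s_1/s_2$. For any $s, t \in S$, the point $(s t s_2, \, s t s_1)$ lies on the line $y = q x$, since $(sts_1)/(sts_2) = s_1/s_2 = q$; and its coordinates both lie in $S \cdot S \cdot S$ after suitable bookkeeping. The cleaner route is to take $\cP = \{(u, v) \in (S\cdot S) \times (S \cdot S)\}$ and observe that for a fixed representative of the slope $q = s_1/s_2$, the points $(s_2 s,\, s_1 s)$ for $s \in S$ all lie on $y = qx$ and all have coordinates in $S \cdot S$; distinctness of these points as $s$ ranges over $S$ gives $n = |S|$ points per line. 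Thus each of the $L = |S/S|$ lines contains at least $|S|$ points of the fixed set $\cP \subseteq (S\cdot S) \times (S\cdot S)$.

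Applying Lemma~\ref{Solymosi} with $n = |S|$ and $L = |S/S|$ then yields
\[
|\cP + \cP| \geq (|S/S| - 1)\,|S|^2 \gg |S/S|\,|S|^2.
\]
Finally I would bound $|\cP + \cP|$ from above. Since $\cP \subseteq (S\cdot S)\times(S \cdot S)$, the sumset $\cP + \cP$ is contained in $(S\cdot S + S\cdot S) \times (S \cdot S + S \cdot S)$, so $|\cP + \cP| \leq |S\cdot S + S\cdot S|^2$. Combining the two inequalities gives $|S\cdot S + S\cdot S|^2 \gg |S/S|\,|S|^2$, which is exactly the claim.

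The main obstacle I anticipate is the sign/quadrant hypothesis in Lemma~\ref{Solymosi}: the lemma requires the points to lie in a single quadrant, and lines through the origin are only counted once even though they extend into two opposite quadrants. I would handle this by first restricting to the positive part of $S$ (or by a dyadic pigeonholing into sign classes of $S$), which costs only a constant factor and guarantees that the constructed points $(s_2 s, s_1 s)$ all lie in one quadrant; one must also check that shrinking to a positive subset does not destroy the lower bounds on $|S/S|$ and on the per-line point count, which follows since a constant fraction of $S$ survives. The only other point requiring care is verifying the distinctness claims used in Lemma~\ref{Solymosi} — that the $n^2$ sums from each adjacent pair of lines are genuinely distinct and separated from sums of other pairs — but that is already established inside the proof of Lemma~\ref{Solymosi}, so here it suffices to confirm that my chosen points are distinct along each line, which is immediate from the injectivity of $s \mapsto s_2 s$ on $S$.
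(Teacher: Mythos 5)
Your construction is the same as the paper's --- the points $(s_2s,\,s_1s)$, $s\in S$, on each line of slope $s_1/s_2$, Lemma~\ref{Solymosi} with $L=|S/S|$ and $n=|S|$, and the containment $\cP+\cP\subseteq (S\cdot S+S\cdot S)\times(S\cdot S+S\cdot S)$ --- but your treatment of the quadrant hypothesis, which you correctly identify as the main obstacle, has a genuine gap. You propose to restrict to the positive part $S^+$ of $S$ (and since there are only two sign classes, your ``dyadic pigeonholing into sign classes'' is the same move), asserting that the lower bounds survive ``since a constant fraction of $S$ survives.'' That implication is false: a subset containing half the elements of $S$ can have a drastically smaller ratio set. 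Take $S=\{2^i:1\le i\le n\}\cup\{-3^i:1\le i\le n\}$. Then $|S/S|\ge n^2$, because the cross-ratios $-2^i3^{-j}$ are pairwise distinct, while $|S^+/S^+|=2n-1$ and likewise for the negative part. Your restricted argument therefore only yields $|S^+\cdot S^+ + S^+\cdot S^+|^2\gg |S^+/S^+||S^+|^2\approx n^3$, which falls far short of the claimed $|S\cdot S+S\cdot S|^2\gg |S/S||S|^2\approx n^4$. So the reduction to one sign class cannot prove the corollary as stated.

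The correct fix --- and the one the paper uses --- is to pigeonhole on the \emph{lines} rather than on $S$. Keep the full mixed-sign point set $\cP=\{(s_1s,s_2s):s_1,s_2,s\in S\}$; every line $l_r$, $r\in S/S$, still carries $|S|$ points of $\cP$. A line of positive slope meets only the first and third quadrants, a line of negative slope only the second and fourth, so one of these two families contains at least $\tfrac12|S/S|$ lines; within that family, each line has at least half of its $|S|$ points in one of its two quadrants, and a further pigeonholing gives at least $\tfrac14|S/S|$ lines, each with at least $|S|/2$ points in a single common quadrant. Applying Lemma~\ref{Solymosi} with $L\ge\tfrac14|S/S|$ and $n\ge|S|/2$ then loses only constant factors. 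Incidentally, in the paper's application the corollary is invoked with $S=D=A-A$, which is symmetric, and for symmetric sets your reduction does happen to work, since then $D/D=(D^+/D^+)\cup(-(D^+/D^+))\cup\{0\}$ and hence $|D/D|\le 2|D^+/D^+|+1$; but that rescue uses symmetry, which you nowhere invoke, and it is not available for the corollary as stated for arbitrary real $S$.
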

\begin{proof}
We will apply Lemma \ref{Solymosi} to the set \[\cP=\{(s_1s,s_2s):s_1,s_2,s\in S\},\] which is a subset of $(S\cdot S)\times (S\cdot S)$. Let $r=s_1/s_2\in S/S$. Choosing $s\in S$ arbitrarily, we have $|S|$ points $(s_2s,s_1s)$, and each lies on the line $l_r$ through the origin with slope $r$. Let $\cL_1$ be the set of such lines which pass through the first and third quadrants, and $\cL_2$ the set of those that pass through the second and fourth quadrants. Then one of $|\cL_1|$ or $|\cL_2|$ has size at least $\frac{1}{2}|S/S|$. Assume $|\cL_1|\geq \frac{1}{2}|S/S|$ as the other case is similar. Next, at let $\cL_1^+$ denote those lines which have at least half of their points from $(S\cdot S)\times (S\cdot S)$ in the first quadrant and $\cL_1^-$ those with at least half of their points in the third quadrant. One of the two sets has at least $\frac{1}{4}|S/S|$ lines in it, and each line has at least $|S|/2$ points in some fixed quadrant. Now the corollary follows from Lemma \ref{Solymosi}, the fact that
\[\cP+\cP\subset (S\cdot S)\times (S\cdot S)+(S\cdot S)\times (S\cdot S),\] and the fact that \[(S\cdot S\times S\cdot S)+(S\cdot S\times S\cdot S)=(S\cdot S+S\cdot S)\times  (S\cdot S+S\cdot S).\]
\end{proof}

Finally, we recall a beautiful theorem due to Ungar (\cite{U}), which gives a lower bound for the number of slopes defined by a set of points.
\begin{Theorem}\label{Slopes}
Let $\cP$ be a finite set of points, not all on a line. Then the set \[\left\{\frac{y_1-y_2}{x_1-x_2}:(x_1,y_1),(x_2,y_2)\in \cP\right\}\] has size at least $|\cP|-1$.
\end{Theorem}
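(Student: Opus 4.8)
The plan is to prove this by the classical rotating-direction (allowable sequence) method of Ungar, since the statement is precisely the weak form of Ungar's direction theorem: a non-collinear set of $n=|\cP|$ points determines at least $2\flr{n/2}\geq n-1$ distinct directions, and the hardest case $n$ odd gives exactly $n-1$. First I would reduce the slope count to a count of combinatorial ``moves''. For a generic direction $\theta$, order $\cP$ by the value of the projection $x\cos\theta+y\sin\theta$, obtaining a linear order $\pi_\theta$ of the $n$ points. Each point traces a sinusoid in $\theta$, and for two points the difference of projections is $R\cos(\theta-\alpha)$, which has exactly one zero as $\theta$ runs over $[0,\pi)$; thus any two points exchange their order exactly once, namely when $\theta$ is orthogonal to the line joining them. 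Hence as $\theta$ sweeps from $0$ to $\pi$ the order $\pi_\theta$ is transformed into its reverse, and $\pi_\theta$ changes precisely at the angles orthogonal to a direction determined by $\cP$. The map sending a determined direction $\phi$ to the critical angle $\phi+\tfrac{\pi}{2}\pmod\pi$ is a bijection, so it suffices to bound the number of critical angles below by $n-1$.

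Next I would record the structure of a single critical angle. At a critical angle $\theta_t$, the points mutually collinear in the direction orthogonal to $\theta_t$ have coincident projections; each maximal such collinear subset forms a block of consecutive entries of the current order, and passing through $\theta_t$ simultaneously reverses all these pairwise disjoint blocks. Since each pair of points crosses exactly once, every pair is inverted exactly once over the whole sweep, giving $\sum_{t,b}\binom{k_{t,b}}{2}=\binom{n}{2}$, where the $k_{t,b}$ are the block sizes at the critical angles. This recasts the problem purely combinatorially: starting from the identity permutation and reaching its reverse by moves, each move reversing a family of disjoint blocks with at least one block of size $\geq 2$, how few moves are possible? The hypothesis that $\cP$ is not contained in a line is exactly what forbids the single move that reverses one block of size $n$.

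The heart of the argument, and the step I expect to be the main obstacle, is the lower bound of $n-1$ on the number of such moves. In general position this is easy: fixing a vertex $p$ of the convex hull, the $n-1$ rays from $p$ to the other points all point into an open half-plane and have distinct directions (no three points are collinear), forcing $n-1$ critical angles. The entire difficulty is caused by collinearities, which allow a single move to invert many pairs at once, so that the naive per-pair or per-adjacent-pair bookkeeping collapses; indeed the extremal configurations (regular polygons, certain lattice sections) attain $2\flr{n/2}$. Ungar's resolution is a delicate global counting on the arrangement of the $n$ sinusoids, tracking how reversed blocks nest and interleave across successive moves and showing that ``large'' moves must be compensated by enough distinct critical angles. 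I would reproduce this counting, attending to two technical points: perturbing away any coincidental equalities among the critical angles so that the sweep is well defined, and treating the boundary values $\theta=0,\pi$ consistently so that no critical angle is double counted or missed. Once the combinatorial bound is in hand, the bijection from the first step yields at least $|\cP|-1$ distinct slopes, completing the proof.
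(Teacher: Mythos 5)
Note first that the paper does not prove this theorem at all: it is quoted from the literature with a citation to Ungar \cite{U}, so there is no internal proof to match, and the question is whether your sketch amounts to a self-contained proof of Ungar's result. Your reduction is the correct and standard one: the projection sweep, the fact that each pair of points exchanges order exactly once in $[0,\pi)$, the bijection between determined directions and critical angles, the description of each critical angle as a simultaneous reversal of disjoint consecutive blocks, and the identity $\sum_{t,b}\binom{k_{t,b}}{2}=\binom{n}{2}$ all faithfully recast the problem as a lower bound on the number of moves in an allowable sequence carrying the identity permutation to its reversal, with noncollinearity forbidding the single move that reverses one block of size $n$. Up to that point the proposal is sound.

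The gap is at the decisive step, where you write that Ungar's resolution is ``a delicate global counting'' which you ``would reproduce'': that counting is not a technical detail to be filled in, it \emph{is} the theorem, and nothing in your sketch supplies it. Your crossing identity gives the bound only when blocks are small (if every block had size $2$, each move inverts at most $\flr{n/2}$ pairs, so at least $\binom{n}{2}/\flr{n/2}\geq n-1$ moves are forced), but it collapses exactly in the presence of large blocks: a single block of size $n-1$ inverts $\binom{n-1}{2}$ pairs at once, and the identity alone permits sweeps with far fewer than $n-1$ moves. This is not a case that can be waved away --- before Ungar, direct counting and convex-hull arguments of the kind you invoke for general position yielded only bounds of roughly $n/2$ (Burton--Purdy), and the jump to $2\flr{n/2}$ requires Ungar's specific charging scheme (partitioning the sweep by moves that cross the median position and showing each such move is accompanied by enough others), which you name but do not carry out. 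As written, the proposal is therefore a correct reduction plus an appeal to the very statement being proved. One further small point: the set in the statement consists of finite slopes, so if the vertical direction is among those determined you only get $2\flr{n/2}-1\geq n-2$ finite slopes from Ungar's bound; one should either count directions in $\RR\cup\{\infty\}$ or observe that the loss of one slope is harmless in the paper's application to $\cP=A\times A$, where the vertical direction is always determined.
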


\begin{proof}[Proof of Main theorem]
Set $D=A-A$ and $\Delta=\Delta(\cP)=D^2+D^2$. From Lemma \ref{Differencing} and Theorem \ref{PlunneckeRuzsa}, we see that
\[|D\cdot D+D\cdot D|\leq|4D^2-4D^2|\leq\lr{\frac{|D^2+D^2|}{|D^2|}}^8|D^2|=\frac{|\Delta|^8}{|D|^7}.\] From Corollary \ref{ProductSumset}, we have that
\[|D\cdot D+D\cdot D|\gg |D||D/D|^{1/2}.\] But $D/D$ is the set of slopes from the set $A\times A$, so Theorem \ref{Slopes} gives \[|D\cdot D+D\cdot D|\gg |D||A|.\] Combining these estimates gives that
\[|D|\ll |\Delta||A|^{-1/8}.\] The theorem follows.
\end{proof}

\section{Acknowledgments}
I thank Oliver Roche-Newton and Adam Sheffer for much helpful discussion. I became interested in this problem while attending the IPAM reunion conference for the program \emph{Algebraic Techniques for Combinatorial and Computational Geometry}. 


\begin{thebibliography}{HD}

\normalsize
\baselineskip=17pt


\bibitem[CG]{CG} J. Cilleruelo and A. Granville, \emph{Lattice points on circles, squares in arithmetic progressions and sumsets of squares}, Additive combinatorics, 241-262, CRM Proc. Lecture Notes, 43, Amer. Math. Soc., Providence, RI, 2007. 

\bibitem[ENR]{ENR}G. Elekes, M.B. Nathanson and I. Ruzsa, \emph{Convexity and sumsets}, J. Number Theory 83 (2000), no. 2, 194-201. 

\bibitem[E1]{E1}P. Erd\H{o}s, \emph{On sets of distances of n points}, Amer. Math. Monthly (1946) 53, 248-250.

\bibitem[E2]{E2}P. Erd\H{o}s, \emph{On some metric and combinatorial geometric problems}, Discrete Math. 60
(1986), 147-153.

\bibitem[GK]{GK} L. Guth and N.H.Katz, \emph{On the Erd\H{o}s distinct distances problem in the plane}, Ann. of Math. (2) 181 (2015), no. 1, 155-190.

\bibitem[LRN]{LRN} L. Li and O. Roche-Newton, \emph{Convexity and a sum-product type estimate}, Acta Arith. 156 (2012), no. 3, 247-255.

\bibitem[P]{P} G. Petridis, \emph{New proofs of Pl\"unnecke-type estimates for product sets in groups}, Combinatorica 32 (2012), no. 6, 721-733.

\bibitem[SZZ]{SZZ} A. Sheffer, J. Zahl and F. de Zeeuw, \emph{Few distinct distances implies no heavy lines or circles}, arXiv:1308.5620.

\bibitem[S]{S} I. D. Shkredov, \emph{Difference sets are not multiplicatively closed}, arXiv:1602.02360.

\bibitem[So]{So} J. Solymosi, \emph{Bounding multiplicative energy by the sumset}, Adv. Math. 222 (2009), no. 2, 402-408. 

\bibitem[U]{U} P. Ungar, \emph{2N noncollinear points determine at least 2N directions}, J.
Combinatorial Theory, Ser. A 33 (1982), 343-347.

\end{thebibliography}
\end{document}